\newcommand{\FF}{\mathbb F}
\newcommand{\RR}{\mathbb R}
\newcommand{\mc}{\mathcal }
\newcommand{\ci}{
\begin{picture}(6,6)
\put(3,3.6){\circle*{3}}
\end{picture}}
\DeclareMathOperator{\rank}{rank}
\newcommand{\matt}[1]{\left[\begin{smallmatrix}
   #1\end{smallmatrix}\right]}
\newcommand{\mat}[1]{\begin{bmatrix}
   #1\end{bmatrix}}
\newtheorem{theorem}{Theorem}
\theoremstyle{definition}
\begin{document}
\title{Wildness of the problems of classifying two-dimensional spaces of commuting linear operators and certain Lie algebras}

\author[fut]{Vyacheslav Futorny}
\ead{futorny@ime.usp.br}
\address[fut]{Department of Mathematics, University of S\~ao Paulo, Brazil}

\author[kly,pet]{Tetiana Klymchuk}
\ead{tetiana.klymchuk@upc.edu}
\address[kly]{Universitat Polit\`{e}cnica de Catalunya, Barcelona, Spain}

\author[pet]{Anatolii P.~Petravchuk}
\ead{aptr@univ.kiev.ua}
\address[pet]{Faculty of Mechanics and Mathematics, Taras Shevchenko University, Kiev, Ukraine}

\author[ser]{Vladimir~V.~Sergeichuk\corref{cor}}
\ead{sergeich@imath.kiev.ua}
\address[ser]{Institute of Mathematics,
Tereshchenkivska 3, Kiev, Ukraine}


\cortext[cor]{Corresponding author.  Published in: Linear Algebra Appl. 536 (2018) 201--209.}


\begin{abstract}
For each two-dimensional vector space $V$  of commuting $n\times n$ matrices over a field  $\FF$ with at least 3 elements, we denote by $\widetilde V$ the vector space of all $(n+1)\times(n+1)$ matrices of the form $\matt{A&*\\0&0}$ with $A\in V$. We prove the wildness of the problem of classifying  Lie algebras $\widetilde V$ with the bracket operation $[u,v]:=uv-vu$. We also prove the wildness of the problem of classifying two-dimensional vector spaces consisting of
commuting linear operators on a vector space
over a field.
 \end{abstract}

\begin{keyword} Spaces of commuting linear operators, Matrix Lie algebras, Wild problems.

    \MSC 15A21, 16G60, 17B10.
\end{keyword}

\maketitle

\section{Introduction}
\label{s_intr}

Let $\FF$ be a field that is not the field with 2 elements.
We prove the wildness of the problems of classifying
\begin{itemize}
  \item two-dimensional vector spaces consisting of
commuting linear operators on a vector space
over $\FF$ (see Section 2), and
  \item Lie algebras  $\text{\rm L}(V)$ with bracket $[u,v]:=uv-vu$ of matrices of the form
\begin{equation}\label{11q}
\begin{bmatrix}
  A & \begin{matrix}
        \alpha _1\\\vdots\\ \alpha _n \\
      \end{matrix}
   \\
  \begin{matrix}
        0&\dots&0 \\
      \end{matrix} & 0 \\
\end{bmatrix}, \qquad\text{in which } A\in V,\ \ \alpha _1,\dots,\alpha _n\in\FF,
\end{equation}
in which $V$ is any two-dimensional vector space of  $n\times n$ commuting matrices over $\FF$ (see Section 3).
\end{itemize}

A classification problem is called \emph{wild} if it contains the problem of classifying  pairs of $n\times n$ matrices up to similarity transformations
\begin{equation*}\label{cfa}
(M,N)\mapsto S^{-1}(M,N)S:=(S^{-1}MS,S^{-1}NS)
\end{equation*}
with nonsingular $S$. This notion was introduced by Donovan and Freislich \cite{don1,don2}. Each wild problem is considered as hopeless since it contains the problem of classifying an arbitrary system of linear mappings, that is, representations of an arbitrary quiver (see \cite{gel-pon,bel-ser_compl}).

Let $\mc U$ be an $n$-dimensional vector space over $\FF$. The problem of classifying linear operators $\mc A:\mc U\to \mc U$ is the problem of classifying matrices $A\in\FF^{n\times n}$ up to similarity transformations
$A\mapsto S^{-1}AS
$
with nonsingular $S\in \FF^{n\times n}$. In the same way, the problem of classifying vector spaces $\mc V$ of linear operators on  $\mc U$ is the problem of classifying matrix vector spaces $V\subset \FF^{n\times n}$ up to similarity transformations
\begin{equation}\label{qwa}
V\mapsto S^{-1}VS:=\{S^{-1}AS\,|\,A\in V\}
\end{equation}
with nonsingular $S\in \FF^{n\times n}$ (the spaces $V$ and $S^{-1}VS$ are \emph{matrix isomorphic}; see \cite{gro}).
In Theorem \ref{vtd}(a), we prove the wildness of the  problem of classifying two-dimensional vector spaces $V\subset \FF^{n\times n}$ of  commuting matrices up to transformations \eqref{qwa}.

Each two-dimensional vector space $V\subset \FF^{n\times n}$ is given by its basis $A,B\in V$ that is determined up to transformations
$(A,B)\mapsto(\alpha A+\beta B,\gamma A+\delta B)$, in which $\matt{\alpha &\gamma \\\beta &\delta }\in \FF^{2\times 2}$ is a change-of-basis matrix. Thus, the problem of classifying two-dimensional vector spaces $V\subset \FF^{n\times n}$ up to transformations \eqref{qwa} is the problem of classifying pairs of linear independent matrices $A,B\in \FF^{n\times n}$ up to transformations
\begin{equation}\label{rtf}
(A,B)\mapsto (A',B'):= S^{-1}(\alpha A+\beta B,\gamma A+\delta B)S,
\end{equation}
in which both $S\in \FF^{n\times n}$ and $\matt{\alpha &\beta \\\gamma &\delta }\in \FF^{2\times 2}$ are nonsingular matrices. We say that the matrix pairs $(A,B)$ and $(A',B')$ from \eqref{rtf} are \emph{weakly similar}.

In Theorem \ref{vtd}(b), we prove that the   problem of classifying  pairs of commuting matrices up to weak similarity is wild, which ensures Theorem \ref{vtd}(a).

The analogous problem of classifying matrix pairs  $(A,B)$ up to weak congruence $S^T(\alpha A+\beta B,\gamma A+\delta B)S$
appears in the problem of classifying finite $p$-groups of nilpotency class 2 with commutator subgroup of type $(p,p)$, in the problem of classifying  commutative associative algebras with zero cube radical, and in the problem of classifying Lie algebras with central commutator subalgebra; see \cite{bel-dm,bel-lip,bro,ser_metab}.
The problem of classifying matrix pairs up to weak equivalence $R(\alpha A+\beta B,\gamma A+\delta B)S$ appears
in the theory of tensors \cite{bel-ber}.

Note that the group of $(n+1)\times (n+1)$ matrices
\[
\mat{A&v\\0&1},\qquad\text{in which }  A\in\FF^{n\times n}\text{ is nonsingular and}\ v\in\FF^n
\]
is called the \emph{general affine group}; it is the group of all invertible affine transformations of an affine space; see \cite{lyn}. If $\FF=\RR$, then this group is a Lie group, its Lie algebra consists of all $(n+1)\times (n+1)$ matrices
\[
\mat{A&v\\0&0},\qquad\text{in which }  A\in\RR^{n\times n}\text{ is nonsingular and}\ v\in\RR^n,
\]
and each Lie algebra  $\text{\rm L}(V)$ of matrices of the form \eqref{11q} with  $\FF=\RR$ is its subalgebra.

The abstract version of the construction of Lie algebras $\text{\rm L}(V)$  of matrices of the form
\eqref{11q} is the following. Let
$\FF[x,y]$ be the polynomial ring, and let $\mc W_{\FF[x,y]}$ be a left $\FF[x,y]$-module given by a finite dimensional vector space $\mc W_{\FF}$ and two commuting linear operators $P:w\mapsto xw$ and $Q:w\mapsto yw$ on $\mc W_{\FF}$ that are linearly independent. The $(2+\dim_{\FF}\mc W)$-dimensional vector space $L_{\mc W}:= \FF x\oplus_{\FF}\FF y\oplus_{\FF}\mc W$ is the metabelian Lie algebra with the bracket operation defined by $[x,v]:=Pv$, $[y,v]:=Qv$, and $[x,y]=[v,w]:=0$ for all $v,w\in \mc W$. If $\mc W=\FF^n$ and $V$ is the two-dimensional vector space generated by $P$ and $Q$, then the Lie algebra $L_{\mc W}$ coincides with the Lie algebra $\text{\rm L}(V)$ of all matrices \eqref{11q}.
By \cite[Corollary 1]{pet} and Theorem \ref{vtd}, \emph{the problem of classifying metabelian Lie algebras $L_{\mc W}$ is wild}.

We use the following definition of wild problems (see more formal definitions in \cite{bar,dro, gab}). Every matrix problem $\cal M$ is given
by a set ${\cal M}_1$ of tuples of
matrices over a field $\FF$ and a set ${\cal M}_2$ of
admissible transformations with them.
A matrix problem ${\cal M}$ is
{\it wild} if there exists a $t$-tuple
\begin{equation}\label{erf}
{M}(x,y)=(M_1(x,y),\dots,M_t(x,y))
\end{equation}
of matrices, whose entries are
noncommutative polynomials in
$x$ and $y$ over $\FF$, such that
\begin{itemize}
  \item[(i)]
${M}(A,B)
\in{\cal M}_1$ for all $A,B\in \FF^{n\times n}$ and $n=1,2,\dots$ (in particular, each scalar entry $\alpha $ of $M_i(x,y)$ is replaced by $\alpha I_n$),

  \item[(ii)]  ${M}(A,B)$ is reduced to ${M}(A',B')$ by transformations
${\cal M}_2$ if and only if $(A,B)$ is similar to $(A',B')$.
\end{itemize}

\section{Spaces of linear operators}\label{s2}

\begin{theorem}\label{vtd}
\begin{itemize}
  \item[{\rm(a)}] The problem of classifying up to similarity \eqref{qwa} of two-dimensional vector spaces of commuting matrices over a field\/ $\FF$ is wild. If\/ $\FF$ is not the field of two elements, then the problem of classifying up to similarity of two-dimensional vector spaces of commuting matrices over $\FF$ that contain nonsingular matrices is wild.

  \item[{\rm(b)}] The problem of classifying up to weak similarity \eqref{rtf} of pairs of commuting matrices over a field\/ $\FF$ is wild. If\/ $\FF$ is not the field of two elements, then the problem of classifying up to weak similarity of pairs $(A,B)$ of commuting matrices over\/ $\FF$ such that $\alpha A+\beta B$ is nonsingular for some $\alpha ,\beta \in\FF$ is wild.
\end{itemize}
\end{theorem}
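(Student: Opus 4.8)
The plan is to verify the definition of wildness from conditions (i)--(ii) directly for part~(b); part~(a) will then follow formally. Indeed, as explained around \eqref{qwa}--\eqref{rtf}, a two-dimensional space of commuting operators taken up to similarity \eqref{qwa} is exactly a pair of commuting matrices taken up to weak similarity \eqref{rtf}, and the condition ``the space contains a non-singular matrix'' is precisely ``$\alpha A+\beta B$ is non-singular for some $\alpha,\beta$''; so (b) yields (a) together with its refinement. Thus it suffices to produce a pair $M(x,y)=(P(x,y),Q(x,y))$ of block matrices with entries that are polynomials in $x,y$ such that $P(A,B)$ and $Q(A,B)$ commute for all $A,B$, and such that $M(A,B)$ is weakly similar to $M(A',B')$ if and only if $(A,B)$ is similar to $(A',B')$.

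For the gadget I place the inputs in the ``free corners'' of the commutative-square pattern on a $4\times4$ grid of $n\times n$ blocks:
\[
P=\mat{0&I&0&A\\0&0&0&0\\0&0&0&I\\0&0&0&0},\qquad
Q=\mat{0&0&I&B\\0&0&0&I\\0&0&0&0\\0&0&0&0}.
\]
A one-line block computation gives $PQ=QP$ (both equal the matrix with $I$ in the upper-right corner and zeros elsewhere) for all $A,B$: the inputs $A,B$ sit in positions that the opposite operator annihilates, so commutativity is an identity in $A,B$ and (i) holds. The easy half of (ii) is immediate as well: if $A'=T^{-1}AT$ and $B'=T^{-1}BT$, then conjugating by $S=\diagg(T,T,T,T)$, with trivial change of basis of the pencil, carries $M(A,B)$ to $M(A',B')$.

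The substance of the argument, and the step I expect to be the main obstacle, is the converse: every weak similarity $S^{-1}(\alpha P+\beta Q,\gamma P+\delta Q)S=(P',Q')$ must be normalizable so that the matrix $\matt{\alpha&\beta\\\gamma&\delta}$ becomes trivial and $S$ becomes block-diagonal with equal diagonal blocks. I would do this in two stages. First I pin the pencil by finding an invariant of $R=\alpha P+\beta Q$ that degenerates exactly on the two lines $\FF P$ and $\FF Q$: here $R^2$ has $2\alpha\beta I$ in its corner and vanishes iff $\alpha\beta=0$, so when $\Char\FF\neq2$ the unordered pair $\{\FF P,\FF Q\}$ is intrinsic to the space (in characteristic two one uses instead a rank invariant of a variant gadget). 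To keep the two lines labelled, and thereby forbid the swap $P\leftrightarrow Q$ that would only give $(A,B)\sim(B,A)$, I break the symmetry of the grid so that $P$ and $Q$ are non-isomorphic nilpotents. This reduces any weak similarity to one fixing each of $P,Q$ up to a scalar. Second, I read off the rigidity forced by the identity blocks: matching the $I$-blocks of $P$ and of $Q$ forces $S_1=S_2=S_3=S_4=:T$, after which the corners transform as $T^{-1}AT$ and $T^{-1}BT$, i.e.\ $(A,B)\sim(A',B')$. Removing the residual scalar from the pinning step is exactly where having enough field elements is convenient.

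Finally, for the clauses asserting wildness within pairs admitting a non-singular combination, I modify the gadget so that the pencil meets the non-singular locus while the above analysis survives. Using $|\FF|\geq3$, I replace the nilpotent $P$ by $D+P$ with $D=\diagg(\lambda_1 I,\dots)$ having distinct non-zero entries; then $D+P$ is non-singular, the distinct eigenvalues pin $\FF(D+P)$ intrinsically (replacing the $R^2$ invariant, and incidentally ruling out the swap since $D+P$ is not nilpotent), and re-running the two-stage rigidity argument recovers $(A,B)$ up to simultaneous similarity. Transporting the conclusions through \eqref{qwa}--\eqref{rtf} gives the non-singular versions of (a) and (b), and this is the point at which the hypothesis $\FF\neq\FF_2$ is used.
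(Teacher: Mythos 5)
Your architecture (prove (b) by a single polynomial gadget, deduce (a), and note that the gadget commutes identically in $A,B$ so that Step-1-type reductions are unnecessary) is attractive, but the gadget you propose fails condition (ii) of wildness, and the failure is in the construction itself, not in the write-up. The step you defer as ``removing the residual scalar'' is exactly where it dies: every identity block of $P$ and of $Q$ can absorb the pencil scalars uniformly. Concretely, for any nonzero $\alpha,\delta\in\FF$ put $S=\diagg(\alpha\delta I,\ \delta I,\ \alpha I,\ I)$ (block-scalar); a direct block computation gives
\[
S^{-1}\bigl(\alpha P(A,B),\ \delta Q(A,B)\bigr)S
=\bigl(P(\delta^{-1}A,\ \alpha^{-1}B),\ Q(\delta^{-1}A,\ \alpha^{-1}B)\bigr),
\]
so the gadget of $(A,B)$ is weakly similar to the gadget of $(\delta^{-1}A,\alpha^{-1}B)$ for \emph{all} nonzero $\alpha,\delta$, although these pairs are in general not similar: take $n=1$, $A=B=1$, and $\alpha=\delta\in\FF\setminus\{0,1\}$ (such $\alpha$ exists whenever $|\FF|\ge 3$, which is the case of interest). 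Hence weak similarity of gadgets does not imply similarity of pairs. Your ``second stage'' claim that matching the $I$-blocks forces $S_1=S_2=S_3=S_4$ is valid only after $\alpha=\delta=1$ has been established, which for this gadget is impossible, not merely inconvenient. This is precisely why the paper's construction $M_1,M_2$ carries the extra summand $(I_{m+1},\lambda I_{m+1})$: after the rank argument kills $\beta$ and $\gamma$, the Krull--Schmidt theorem forces the one-dimensional summands $(\alpha,\delta\lambda)$ to be isomorphic to $(1,\lambda)$, pinning $\alpha=\delta=1$. Your gadget contains no such scale anchor, and none can be created by refining the argument alone.

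There are two further gaps. First, the swap: in your grid both $P$ and $Q$ are square-zero of rank $2n$ on a $4n$-dimensional space, hence each is similar to a direct sum of $2n$ Jordan blocks of size $2$, so $P$ and $Q$ \emph{are} similar and nothing excludes the case $\alpha=\delta=0$, $\beta\gamma\neq0$; the asymmetric variant you promise (``break the symmetry of the grid'') is never exhibited, and it must be, since the swap case would only relate $(A,B)$ to $(B',A')$. Second, the nonsingular refinement: if $D=\diagg(\lambda_1I,\lambda_2I,\lambda_3I,\lambda_4I)$ has distinct diagonal blocks, then $DQ\neq QD$ (the $(1,3)$ block of $DQ$ is $\lambda_1I$ while that of $QD$ is $\lambda_3I$), so $D+P$ no longer commutes with $Q$ and the pair leaves the class of commuting pairs that the theorem is about; only a scalar $D=\lambda I$ preserves commutativity, and that destroys your ``distinct eigenvalues pin the line'' argument. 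The paper avoids both problems structurally: its rank bounds on $\alpha M_1(A)+\beta M_2(B)$ force $\beta=\gamma=0$ (no swap is even possible because $M_1$ and $M_2$ have different rank profiles), and nonsingularity is obtained by feeding the commuting nilpotent Gelfand--Ponomarev pair $(J,K_{XY})$ into $(M_1(\lambda I+J),M_2(K_{XY}))$ with $\lambda\neq0,-1$, which keeps commutativity and makes $M_1(\lambda I+J)+M_2(K_{XY})$ nonsingular. Finally, the $\FF_2$/characteristic-2 case of the first sentence of (b), where your $R^2$ invariant gives no information, is also left as a gesture.
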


\begin{proof} (a) This statement follows from statement (b) since each two-dimensional vector space $V\subset \FF^{n\times n}$ determined up to similarity is given by its basis $A,B\in V$ that is determined up to transformations \eqref{rtf}.

(b)
{\it Step 1: We prove that the problem of classifying pairs of commuting and nilpotent matrices up to similarity is wild.}  This statement was proved by  Gelfand and Ponomarev \cite{gel-pon}; it was extended in \cite{debora} to matrix pairs under consimilarity. By analogy with \cite[Section 3]{debora}, we consider two commuting and nilpotent $5n\times 5n$ matrices
\begin{equation}\label{gtr1}
J:=\left[ \begin{array}{cccc|c}
0&I_n&0&0&0\\0&0&I_n&0&0
\\0&0&0&I_n&0\\0&0&0&0&0\\
\hline 0&0&0&0&0
\end{array}  \right],
     \qquad
K_{XY}:=\left[ \begin{array}{cccc|c}
0&0&X&0&Y\\0&0&0& X&0\\0&0&0&0&0\\0&0&0&0&0\\
\hline 0&0&0&I_n&0
\end{array}  \right]
\end{equation}
that are partitioned into $n\times n$ blocks,
in which $X,Y\in\FF^{n\times n}$ are arbitrary.

 Let us prove that
\begin{equation}\label{njk1}
\parbox[c]{0.8\textwidth}{two pairs  $(X,Y)$ and $(X',Y')$ of $n\times n$ matrices are similar \ $\Longleftrightarrow$ \  two pairs of commuting and nilpotent matrices  $(J,K_{XY})$ and $(J,K_{X'Y'})$ are similar.}
\end{equation}

\noindent  ${\Longrightarrow\!\!.}$ If $(X,Y)S=S(X',Y')$, then $(J,K_{XY})R=R(J,K_{X'Y'})$ with $R:=S\oplus S\oplus S\oplus S\oplus S$.
\medskip

\noindent${\Longleftarrow\!\!.}$ Let $(J,K_{XY})R=R(J,K_{X'Y'})$ with nonsingular $R$. All matrices commuting with a given Jordan matrix are described in \cite[Section VIII, \S\,2]{gan}. Since $R$ commutes with $J$, we analogously find that
\[
R=\left[ \begin{array}{cccc|c}
C&C_1&C_2&C_3&D\\0&{C}&{C_1}
&{C_2}&0\\0&0&C&C_1&0\\0&0&0&{C}&0\\
\hline 0&0&0&E&F
\end{array}  \right].
\]
The equality
$K_{XY}R=RK_{X'Y'}$ implies that
\[
\left[ \begin{array}{cccc|c}
0&0&XC&XC_1+YE&YF
\\0&0&0&{X}{C}&0
\\0&0&0&0&0\\0&0&0&0&0\\
\hline 0&0&0&{C}&0
\end{array}  \right]
     =
\left[ \begin{array}{cccc|c}
0&0&{C}X'
&{C_1}{X'}+{D}
&{C}Y'
\\0&0&0&C{X'}&0
\\0&0&0&0&0\\0&0&0&0&0\\
\hline 0&0&0&{F}&0
\end{array}  \right],
\]
and so
$(X,Y)C={C}(X',Y')$.
\medskip

{\it Step 2: We prove that the problem of classifying matrix pairs up to weak similarity is wild.}
If the field $\FF$ has at least 3 elements, we fix any $\lambda \in\FF$ such that $\lambda\ne 0$ and $\lambda \ne -1$.   If $\FF$ consists of two elements, we take $\lambda =1$.

For each pair $(A,B)$ of $m\times m$ matrices with $m\ge 1$ over $\FF$,
define the matrix pair $(M_1(A),M_2(B))$ as follows:
\begin{align*}
M_1(A)&:=I_{2m+2}\oplus 0_{3m+3}\oplus I_{m+1}\oplus  A,\\
M_2(B)&:=0_{2m+2}\oplus I_{3m+3}\oplus \lambda I_{m+1}\oplus  B.
\end{align*}
(Analogous constructions are used in \cite{bel-dm,bel-lip}.)

Let us prove that  $(M_1(A),M_2(B))$  can be used in \eqref{erf} in order to prove the wildness of the problem of classifying matrix pairs up to weak similarity. We should prove that
\begin{equation}\label{njk}
\parbox[c]{0.8\textwidth}{arbitrary pairs  $(A,B)$ and $(A',B')$ of $m\times m$ matrices are similar \ $\Longleftrightarrow$ \ $(M_1(A),M_2(B))$ and $(M_1(A'),M_2(B'))$ are weakly similar.}
\end{equation}

\noindent  $\Longrightarrow\!\!.$ If $S^{-1}(A,B)S=(A',B')$,
then \[(I_{6m+6}\oplus S)^{-1}(M_1(A),M_2(B))(I_{6m+6}\oplus S)= (M_1(A'),M_2(B')).\]

\noindent$\Longleftarrow\!\!.$ Let
\[
S^{-1}(\alpha M_1(A)+\beta M_2(B),\gamma M_1(A)+\delta M_2(B))S=(M_1(A'),M_2(B'))
\]
with a nonsingular $\matt{\alpha&\beta \\\gamma &\delta }$.
Then
\begin{align*}
\rank(\alpha M_1(A)+\beta M_2(B))&=
\rank M_1(A'),\\
\rank(\gamma M_1(A)+\delta M_2( B))&=
\rank M_2(B').
\end{align*}

If $\beta \ne 0$, then \[\rank(\alpha M_1(A)+\beta M_2(B))> 4m+3\ge
\rank M_1(A').\] Hence $\beta =0$. Since $\matt{\alpha&\beta \\\gamma &\delta }$ is nonsingular, $\delta \ne 0$. If $\gamma \ne 0$, then \[\rank(\gamma M_1(A)+\delta M_2(B))> 5m+4\ge
\rank M_2(B').\] Hence $\gamma =0$.

Thus
\begin{equation*}\label{euc}
S^{-1}(\alpha M_1(A),\delta M_2(B))S=(M_1(A'),M_2(B')),
\end{equation*}
and so the pairs
\begin{equation}\label{666}
\begin{split}
(\alpha M_1(A),\delta M_2(B))&=(\alpha I_{2m+2}, 0_{2m+2})
\oplus (0_{3m+3},\delta I_{3m+3})\\ &
\qquad\qquad\qquad\oplus
(\alpha I_{m+1}, \delta\lambda I_{m+1})\oplus (\alpha A,\delta B),
\\
(M_1(A'),M_2(B'))&=(I_{2m+2}, 0_{2m+2})
\oplus (0_{3m+3},I_{3m+3})\\&\qquad\qquad\qquad\oplus
(I_{m+1},\lambda I_{m+1})\oplus (A',B')
\end{split}
\end{equation}
give isomorphic representations of the quiver
${\lefttorightarrow\!\!\ci\!\!\righttoleftarrow}$. By the Krull--Schmidt theorem for quiver representations (see \cite[Theorem 1.2]{sch}), every representation of a quiver is isomorphic to a direct sum of indecomposable representations, and this sum is uniquely
determined up to replacements of direct summands by isomorphic representations and permutations of direct
summands.

If we
delete in \eqref{666} the summands  $(\alpha I_{2m+2}, 0_{2m+2}) $ and $(0_{3m+3},\delta I_{3m+3})$ of $(\alpha M_1(A),\delta M_2(B))$  and the corresponding isomorphic summands $(I_{2m+2},0_{2m+2}) $ and $(0_{3m+3},I_{3m+3})$ of  $(M_1(A'),M_2(B'))$,  we find that the remaining pairs
\[
(\alpha I_{m+1}, \delta \lambda I_{m+1})\oplus (\alpha A,\delta B),\qquad (I_{m+1},\lambda I_{m+1})\oplus (A',B')
\]
give isomorphic representations of the quiver
${\lefttorightarrow\!\!\ci\!\!\righttoleftarrow}$.
The first pair has $m+1$ direct summands $(\alpha,\delta\lambda )$ and the second pair has $m+1$ direct summands $(1,\lambda )$. By the Krull--Schmidt theorem, these summands give isomorphic representations, hence $\alpha =\delta =1$, and so the pairs $(A,B)$ and $(A',B')$ give isomorphic representations too. Therefore, the pairs $(A,B)$ and $(A',B')$ are similar.

\medskip

{\it Step 3.}  By Steps 1 and 2, the following
equivalences hold for arbitrary pairs $(X,Y)$ and $(X',Y')$ of $n\times n$ matrices over $\FF$:
\begin{itemize}
  \item[\phantom{$\Longleftrightarrow$}]
$(X,Y)$ and $(X',Y')$ are similar

  \item[$\Longleftrightarrow$]
$(J,K_{XY})$ and $(J,K_{X'Y'})$ are similar

\item[$\Longleftrightarrow$]
$(\lambda I+J,K_{XY})$ and $(\lambda I+J,K_{X'Y'})$ are similar

\item[$\Longleftrightarrow$]
$(M_1(\lambda I+J),M_2(K_{XY}))$ and $(M_1(\lambda I+J),M_2(K_{X'Y'}))$ are weakly similar.
\end{itemize}
Note that $(M_1(\lambda I+J),M_2(K_{XY}))$ and $(M_1(\lambda I+J),M_2(K_{X'Y'}))$ are pairs of commuting matrices.
If $\FF$ has at least 3 elements, then the matrix
$M_1(\lambda I+J)+M_2(K_{XY})$ is nonsingular.
\end{proof}

\section{Lie algebras}\label{s3}

For each vector space $V\subset \FF^{n\times n}$ of commuting  matrices over a field $\FF$, we denote by $\widetilde V$ the vector space of all $(n+1)\times (n+1)$ matrices
of the form
\[
(A|a):=\begin{bmatrix}
  A & \begin{matrix}
        \alpha _1\\\vdots\\ \alpha _n \\
      \end{matrix}
   \\
  \begin{matrix}
        0&\dots&0 \\
      \end{matrix} & 0 \\
\end{bmatrix}, \qquad\text{in which  $A\in V$ and } a:=\mat{\alpha _1\\\vdots\\\alpha _n}\in\FF^n.
\]
We consider the space $\widetilde V$
as
the Lie algebra $\text{L}(V)$ with the Lie bracket operation
\begin{equation}\label{aew}
[(A|a),(B|b)]:=(A|a)(B|b)-(B|b)(A|a)=(0|Ab-Ba).
\end{equation}

\begin{theorem}\label{ttr}
Let a field\/ $\FF$ be not the field with 2 elements.
\begin{itemize}
  \item[\rm(a)]
Let $V\subset \FF^{n\times n}$ and $V'\subset \FF^{n'\times n'}$ be two vector spaces  of commuting  matrices that contain nonsingular matrices.
Then the following statements are equivalent:
\begin{itemize}
  \item[\rm(i)]
The Lie algebras $\text{\rm L}(V)$ and $\text{\rm L}(V')$ are
isomorphic.

  \item[\rm(ii)]
$n=n'$ and $V$ is similar to $V'$  $($i.e., $SVS^{-1}=V'$ for some nonsingular $S\in\FF^{n\times n})$,

  \item[\rm(iii)]
$n=n'$ and  $\widetilde V$ is similar to $\widetilde V'$.
\end{itemize}

  \item[\rm(b)] The problem of classifying Lie algebras $\text{\rm L}(V)$ with $\dim_{\FF} V=2$  up to isomorphism
is wild.
\end{itemize}
\end{theorem}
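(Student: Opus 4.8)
The plan is to establish (a) first and then obtain (b) by feeding the wild family of Theorem \ref{vtd} through the construction $V\mapsto\mathrm{L}(V)$. For (a) the whole point is that the abstract Lie algebra $\mathrm{L}(V)$ remembers $V$ up to similarity, and the key to this is an intrinsic description of the subspace $W:=\{(0|a):a\in\FF^n\}$. I claim that $W$ is precisely the derived subalgebra $[\mathrm{L}(V),\mathrm{L}(V)]$: by \eqref{aew} every bracket lands in $W$, while conversely, if $N\in V$ is nonsingular, then $[(N|0),(0|b)]=(0|Nb)$ exhausts $W$ as $b$ ranges over $\FF^n$. This is the only place where the hypothesis that $V$ contains a nonsingular matrix enters, and it makes $n=\dim_\FF[\mathrm{L}(V),\mathrm{L}(V)]$ an isomorphism invariant. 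I would also record that $W$ is an abelian ideal and that the adjoint action of $\mathrm{L}(V)$ on $W$ descends to an action of the abelian quotient $\mathrm{L}(V)/W\cong V$ which, under $W\cong\FF^n$, is just the tautological action of $V\subset\FF^{n\times n}$ on $\FF^n$.

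With this in hand the three implications go as follows. The implication (iii)$\Rightarrow$(i) is immediate because conjugation preserves brackets, $[T^{-1}MT,T^{-1}NT]=T^{-1}[M,N]T$, so a similarity $T^{-1}\widetilde VT=\widetilde{V'}$ is simultaneously a Lie-algebra isomorphism $\mathrm{L}(V)\to\mathrm{L}(V')$. For (ii)$\Rightarrow$(iii), conjugating $\widetilde V$ by $S\oplus 1$ sends $(A|a)$ to $(SAS^{-1}|Sa)$, hence carries $\widetilde V$ onto $\widetilde{V'}$ whenever $SVS^{-1}=V'$. The substantive step, which I expect to be the main obstacle, is (i)$\Rightarrow$(ii). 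Given an isomorphism $\phi\colon\mathrm{L}(V)\to\mathrm{L}(V')$, it must map $W=[\mathrm{L}(V),\mathrm{L}(V)]$ onto $W'=[\mathrm{L}(V'),\mathrm{L}(V')]$; this already yields $n=n'$, and after the identifications $W\cong\FF^n$, $W'\cong\FF^{n'}$ the restriction $S:=\phi|_W$ becomes a nonsingular matrix. Applying $\phi$ to $[(A|0),(0|b)]=(0|Ab)$ and using that a bracket with $W'$ depends only on the class of its first argument modulo $W'$, I would obtain $S(Ab)=\bar A(Sb)$ for every $b$, where $\bar A\in V'$ is the image of $A$ under the induced isomorphism $\mathrm{L}(V)/W\to\mathrm{L}(V')/W'$. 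Thus $\bar A=SAS^{-1}$, so $SVS^{-1}\subseteq V'$; equality follows from $\dim_\FF V=\dim_\FF V'$, and $V$ is similar to $V'$. This closes the cycle (iii)$\Rightarrow$(i)$\Rightarrow$(ii)$\Rightarrow$(iii).

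For (b) I would invoke the explicit family constructed in the proof of Theorem \ref{vtd}: for each pair $(X,Y)$ of $n\times n$ matrices over $\FF$ it produces the two-dimensional commuting space $V_{XY}$ spanned by $M_1(\lambda I+J)$ and $M_2(K_{XY})$, which contains a nonsingular matrix and satisfies $(X,Y)\sim(X',Y')$ if and only if $V_{XY}$ is similar to $V_{X'Y'}$. Composing this with the equivalence (i)$\Leftrightarrow$(ii) of part (a) gives $(X,Y)\sim(X',Y')\iff V_{XY}\sim V_{X'Y'}\iff \mathrm{L}(V_{XY})\cong\mathrm{L}(V_{X'Y'})$. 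Since the entries of $M_1(\lambda I+J)$ and $M_2(K_{XY})$ are noncommutative polynomials in $X$ and $Y$, so are the entries of the matrices presenting $\mathrm{L}(V_{XY})$; hence $(X,Y)\mapsto\mathrm{L}(V_{XY})$ is a family of the form \eqref{erf} satisfying conditions (i) and (ii) of the definition of wildness, and it reduces the classification of arbitrary matrix pairs up to similarity to the classification of the Lie algebras $\mathrm{L}(V)$ with $\dim_\FF V=2$ up to isomorphism. This establishes (b).
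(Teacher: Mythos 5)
Your proposal is correct and follows essentially the same route as the paper: both identify $(0|\FF^n)$ as the derived subalgebra $[\mathrm{L}(V),\mathrm{L}(V)]$ (using the nonsingular matrix in $V$), extract the conjugating matrix $S$ from the restriction of the isomorphism to this ideal, verify $SA S^{-1}=\bar A\in V'$ by the same bracket computation, handle (ii)$\Rightarrow$(iii) via $S\oplus 1$ and (iii)$\Rightarrow$(i) via conjugation, and deduce (b) from (i)$\Leftrightarrow$(ii) together with Theorem \ref{vtd}(a). Your phrasing via the quotient $\mathrm{L}(V)/W$ and its tautological action on $W$, rather than the paper's explicit bases $e_i$, $f_i=Se_i$, is only a cosmetic difference.
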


\begin{proof} (a) Let us prove the equivalence of (i)--(iii).

(i)$\Rightarrow$(ii) Let $\varphi :\text{L}(V)\widetilde{\to}\text{L}(V')$ be an isomorphism of Lie algebras. Then $\varphi [\widetilde V,\widetilde V]=[\widetilde V',\widetilde V']$.
By \eqref{aew}, $[\widetilde V,\widetilde V]\subset (0|\FF^n)$. Since $V$ contains a nonsingular matrix $A$, $[(A|0),(0|\FF^n)]=(0|\FF^n)$, and so $[\widetilde V,\widetilde V]= (0|\FF^n)$. Hence $\varphi (0|\FF^n)=(0|\FF^{n'})$ and $n=n'$.

Let $e_1,\dots,e_n$ be the standard basis of $\FF^n$, and let  $(0|f_i):=\varphi(0| e_i)$. Since $\varphi (0|\FF^n)=(0|\FF^{n})$,  $f_1,\dots,f_n$ is also a basis of $\FF^n$.
Denote by $S$ the nonsingular matrix whose columns are $f_1,\dots,f_n$. Then
\begin{equation}\label{33c}
f_i=Se_i.
\end{equation}

Let $A\in V$ and write $(B|b):=\varphi (A|0)$. Let $A=[\alpha _{ij}]_{i,j=1}^n$, i.e., $Ae_j=\sum_i\alpha _{ij}e_i$. Then
\begin{align*}
 (0|Bf_j)&=[(B|b),(0|f_j)]=[\varphi (A|0),\varphi (0|e_j)]=
 \varphi [(A|0),(0|e_j)]\\
&=\varphi (0|Ae_j)=\varphi (0,{\textstyle \sum\nolimits_i}\alpha _{ij}e_i)=\varphi {\textstyle \sum\nolimits_i}\alpha _{ij}(0|e_i)\\
&={\textstyle \sum\nolimits_i}\alpha _{ij}\varphi (0|e_i)=
{\textstyle \sum\nolimits_i}\alpha _{ij}(0|f_i)=
(0|{\textstyle \sum\nolimits_i}\alpha _{ij}f_i)
\end{align*}
and so $Bf_j=\sum_i\alpha _{ij}f_i$.
By \eqref{33c},
$$BSe_j={\textstyle \sum\nolimits_i}\alpha _{ij}Se_i=S{\textstyle \sum\nolimits_i}\alpha _{ij}e_i=SAe_j.$$ Therefore, $BS=SA$ and so $V'S=SV$.
\medskip

(ii)$\Rightarrow$(iii)
If $V$ and $V'$ are similar via $S$, then $\widetilde V$ and $\widetilde V'$ are similar via $S\oplus I_1$.
\medskip

(iii)$\Rightarrow$(i)
If $R\widetilde V R^{-1}=\widetilde V'$ for some nonsingular $R\in\FF^{(n+1)\times (n+1)}$, then
$X\mapsto RX R^{-1}$  is an isomorphism $\text{L}(V)\widetilde{\to}\text{L}(V')$.
\medskip

(b) This statement follows from the equivalence (i)$\Leftrightarrow$(ii) and Theorem 1(a).
\end{proof}

\section*{Acknowledgements}

V. Futorny  was
supported by  CNPq grant 301320/2013-6 and by
FAPESP grant 2014/09310-5.   V.V.~Sergeichuk was supported by
FAPESP grant 2015/05864-9.

\end{document}